\documentclass[11pt]{amsart}
\usepackage{hyperref}
\usepackage{anysize} \marginsize{1.3in}{1.3in}{1in}{1in}
\usepackage{amsfonts}

\usepackage[all,cmtip]{xy}
\usepackage{amsmath}
\usepackage{amsthm}
\usepackage{amssymb}
\usepackage{mathrsfs}
\usepackage{enumitem}
\usepackage{tikz-cd}
\usepackage{tikz}
\synctex=1
\usepackage[percent]{overpic}

\usepackage[normalem]{ulem}

\newtheorem{thm}{Theorem}[section]

\newtheorem{prop}[thm]{Proposition}
\newtheorem{lemma}[thm]{Lemma}

\newtheorem{cor}[thm]{Corollary}

\theoremstyle{definition}
\newtheorem{defn}[thm]{Definition}

\newtheorem*{definition*}         {Definition}

\newtheorem{eg}[thm]{Example}
\theoremstyle{remark}
\newtheorem*{claim}{Claim}

%%%%%%%%%%%%%%%%%%%%%%%%%%%%%%%%%

\newcommand{\EE}{\mathcal{E}}

\newcommand{\DD}{\mathcal{D}}
\newcommand{\HH}{\mathbb{H}}

\newcommand{\Z}{\mathbb{Z}}

\newcommand{\C}{\mathbb{C}}

\newcommand{\arrow}{\rightarrow}

\newcommand{\Res}{\mathrm{Res}}

%-----BEN STUFF-------
\renewcommand{\phi}{\varphi}

\def\Res{\operatorname{Res}}

\def\Imag{\operatorname{Im}}

\def\Imag{\operatorname{Im}}

\def\Coh{\operatorname{Coh}}

\def\reduced{\mathrm{red}}

\def\Coh{\mathrm{\mathbf{Coh}}}

\newcommand{\df}{\mathrm{def}}
\newcommand{\an}{\mathrm{an}}

\renewcommand{\bar}[1]{\overline{#1}}

%
%Colour macros introduced here!
%
\usepackage{amscd,amssymb,amsmath}
\usepackage{color}

\setlength{\marginparwidth}{1in}

\title{Definable structures on flat bundles}

 \author[B. Bakker]{Benjamin Bakker}
\address{\noindent B. Bakker:  Dept. of Mathematics, Statistics, and Computer Science, University of Illinois at Chicago, Chicago, USA.}
\email{bakker.uic@gmail.com}

\author[S. Mullane]{Scott Mullane}
\address{\noindent S. Mullane: Humboldt-Universit\"at zu Berlin, Institut f\"ur Mathematik,  Unter den Linden 6, 10099 Berlin, Germany} \email{{\tt mullanes@math.hu-berlin.de}}

\def\U{\mathcal{U}}
\def\G{\mathbf{G}}

\def\O{\mathcal{O}}

\def\D{\mathcal{D}}
\def\V{\mathcal{V}}
\def\R{\mathbb{R}}
\def\Q{\mathbb{Q}}

\def\an{\mathrm{an}}
\def\dual{\vee}

\def\X{\mathcal{X}}
\def\XX{\mathscr{X}}
\def\Y{\mathcal{Y}}
\def\D{\mathcal{D}}
\def\B{\mathcal{B}}
\def\E{\mathcal{E}}
\def\EE{\mathscr{E}}
\def\F{\mathcal{F}}
\def\YY{\mathscr{Y}}
\def\DD{\mathscr{D}}
\def\W{\mathcal{W}}
\def\WW{\mathscr{W}}
\def\U{\mathcal{U}}
\def\UU{\mathscr{U}}

\begin{document}
\maketitle
\begin{abstract}
A flat vector bundle on an algebraic variety supports two natural definable structures given by the flat and algebraic coordinates.  In this note we show these two structures coincide, subject to a condition on the local monodromy at infinity which is satisfied for all flat bundles underlying variations of Hodge structures. 
\end{abstract}
\section{Introduction}
Local systems arise frequently in algebraic geometry.  For example, they underlie variations of Hodge structures associated to families of algebraic varieties and encode the monodromy of analytic solutions to algebraic differential equations.

For a smooth algebraic variety $X$, to any complex local system $V_\C$ we may associate a natural vector bundle $V_{\O_{X^\an}}:=\O_{X^\an}\otimes_{\C_{X^\an}}V_\C$ with flat connection $\nabla$ which has a unique algebraic structure $(V_{\O_X},\nabla)$ with regular singularities, and this assignment yields an equivalence of categories called the Riemann--Hilbert correspondence.  With respect to this algebraic structure, the flat sections are highly transcendental.  The purpose of this note is to show that with an assumption on the local monodromy at infinity (which is in particular satisfied for local systems underlying variations of Hodge structures), the change-of-basis matrices between flat and algebraic frames are definable in an o-minimal structure.
   
\begin{defn}
Let $\X$ be a smooth analytic space and $(\Y,\D)$ a log smooth analytic space with $\X=\Y\setminus \D$ and inclusion $j:\X\to \Y$.  We say a complex local system $V_\C$ on $\X$ has norm one eigenvalues at infinity with respect to $(\Y,\D)$ if the local monodromy of $j_*V_\C$ has eigenvalues of complex norm one.

Let $X$ be a complex algebraic variety and $V_\C$ a complex local system on $X^\an$.  We say that $V_\C$ has norm one eigenvalues at infinity if for some resolution $\pi:X'\to X$ and some log smooth compactification $(Y',D')$ of $X'$, the local system $(\pi^\an)^*V_\C$ has norm one eigenvalues at infinity with respect to $(Y'^\an,D'^\an)$. 
\end{defn}

It is easy to see that the pullback along an algebraic map of a local system with norm one eigenvalues at infinity also has norm one eigenvalues at infinity, and in particular the condition on the local mondromy is independent of $(Y',D')$ in the definition.  As mentioned above, for any morphism $f:X\to Y$ of algebraic varieties, any $R^k(f^\an)_*\C_{X^\an}$ which is a local system (they all are if $f$ is smooth projective) has norm one eigenvalues at infinity when pulled back to a smooth base, and therefore satisfies the condition.  

Given an algebraic variety $X$ and a fixed o-minimal structure we can form the associated definable analytic variety $X^\df$, functions on which are definable holomorphic solutions to the equations cutting out $X$ (see section \ref{sec:defstruct} and \cite{bbt} for full details).  Even without assuming $X$ is smooth (or even reduced), we have two sources of vector bundles with flat connection on $X^\df$.  On the one hand, any algebraic vector bundle with flat connection $(E,\nabla)$ naturally yields one $(E,\nabla)^\df$ on $X^\df$ by definabilization; on the other hand, the topology of $X^\df$ (which is the euclidean topology of $X(\C)$) can be trivialized on a definable open cover, so we may form $(\O_{X^\df}\otimes_{\C_{X^\df}}V_\C,\nabla)$ directly.  In general these two objects are not isomorphic (see Example \ref{countereg}), as the transition functions between the flat and algebraic frames may not be definable.  In the case that $V_\C$ has norm one eigenvalues at infinity they are: 

\begin{thm}\label{main}Let $X$ be a complex algebraic space and $V_\C$ a local system on $X^\an$ with norm one eigenvalues at infinity.  Then over $\R_{\an,\exp}$ the definable coherent sheaf $\O_{X^\df}\otimes_{\C_{X^\df}}V_\C$ has a unique algebraic structure $V_{\O_X}$.  Moreover, the connection is algebraic with regular singularities.
\end{thm}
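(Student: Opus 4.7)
The plan is to proceed as follows. By definable GAGA (from \cite{bbt}), the definabilization functor from algebraic coherent sheaves on $X$ to definable coherent sheaves on $X^\df$ is an equivalence, so uniqueness of the algebraic structure is automatic once existence is established. To produce it I would first reduce, via resolution of singularities and \'etale descent, to the case when $X$ is smooth with a log smooth compactification $(Y,D)$. Then by the Riemann--Hilbert correspondence of Deligne, the flat bundle $(\O_{X^\an}\otimes V_\C,\nabla)$ is the analytification of a unique algebraic bundle with connection $(V_{\O_X},\nabla)$ having regular singularities along $D$, which extends to the Deligne canonical extension $\widetilde V$ on $Y$ as a locally free sheaf with a logarithmic connection.

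The task then reduces to showing that the natural analytic identification $V_{\O_X}^\an\cong\O_{X^\an}\otimes V_\C$ descends to an isomorphism of the corresponding definable coherent sheaves on $X^\df$; the algebraicity and regular singularity of $\nabla$ then follow from Deligne's construction. This is a local statement near $D$. I would choose local coordinates $(z_1,\ldots,z_n)$ on a polydisc $U\subset Y$ with $D\cap U=\{z_1\cdots z_k=0\}$, fix a holomorphic frame of $\widetilde V$ on $U$, and work on a simply connected definable polysector $\Sigma\subset U\cap X$, for instance $\arg z_j\in(-\pi,\pi)$ for each $j\le k$. On $\Sigma$, a flat frame is obtained from the algebraic frame by applying the multivalued matrix $\exp\bigl(-\sum_{j=1}^k R_j\log z_j\bigr)$, where $R_j$ is the residue of the logarithmic connection along $D_j$.

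Under the norm one eigenvalue hypothesis, the monodromy around $D_j$ has semisimple part with eigenvalues of complex norm one, so $R_j$ is diagonalizable over $\C$ with real eigenvalues up to a commuting nilpotent summand. Writing $R_j=S_j+N_j$ with $S_j$ semisimple with real eigenvalues, $N_j$ nilpotent, and $[S_j,N_j]=0$, the factor $\exp(-N_j\log z_j)$ is a polynomial in $\log z_j=\log|z_j|+i\arg z_j$ and hence definable in $\R_{\an,\exp}$ on $\Sigma$. In a diagonalizing eigenbasis for $S_j$, the factor $\exp(-S_j\log z_j)$ is diagonal with entries $|z_j|^{-\alpha}\bigl(\cos(\alpha\arg z_j)-i\sin(\alpha\arg z_j)\bigr)$ for real $\alpha$; since $|z_j|^{-\alpha}=\exp(-\alpha\log|z_j|)$ and $\arg z_j$ is bounded on $\Sigma$, each such entry is definable in $\R_{\an,\exp}$. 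Hence the change-of-basis matrix is definable on $\Sigma$, and after patching these polysectors into a definable cover of a neighborhood of $D$ by a standard cell decomposition, we obtain the required definable isomorphism.

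The main obstacle is precisely this local definability computation: without the norm one hypothesis, the residues would have eigenvalues with nonzero imaginary part, producing factors $|z_j|^{ib}=\exp(ib\log|z_j|)$ whose real and imaginary parts oscillate without bound as $z_j\to 0$, and these cannot be definable in any o-minimal structure. The subsidiary task of organizing the polysectors into a definable cover with definable transition data is routine from cell decomposition in $\R_{\an,\exp}$.
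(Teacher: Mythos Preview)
Your core local computation---the Jordan decomposition of the residues and the observation that real eigenvalues for the semisimple part make $z_j\mapsto |z_j|^{-\alpha}e^{-i\alpha\arg z_j}$ definable in $\R_{\an,\exp}$ on a bounded sector---is exactly the heart of the paper's argument (its Lemma on the punctured polydisk). The paper packages this slightly differently: rather than starting from the algebraic Deligne extension and showing the change-of-basis to a flat frame is definable, it first builds the Deligne canonical extension \emph{definably} on a relatively compact open of the compactification, and then invokes GAGA on the compact $Y$ together with the equivalence $\Coh(Y^\df)\simeq\Coh(Y^\an)$ for compact definable analytic spaces. These are two views of the same computation, and either yields the smooth case.

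There is, however, a genuine gap in your reduction step. You write ``reduce, via resolution of singularities and \'etale descent, to the case when $X$ is smooth,'' but a resolution $\pi:X'\to X$ is proper birational, not \'etale, so \'etale descent does not apply; and for a general (possibly non-reduced, possibly singular) algebraic space, knowing that $\O_{X'^\df}\otimes\pi^{-1}V_\C$ is algebraic does not by itself produce an algebraic structure on $\O_{X^\df}\otimes V_\C$. The paper handles this by a more delicate d\'evissage: first the case where $X^{\reduced}$ is smooth (using a finite retraction $r:X\to X^{\reduced}$ and definable GAGA for the $r_*\O_{X^\df}$-module structure), and then the general case by Noetherian induction via a pushout of the image of a resolution with a thickening of the complement of the locus where $\pi$ is dominant, again repeatedly invoking definable GAGA to recognize subsheaves of algebraic sheaves as algebraic. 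You should either supply an analogous descent argument or restrict your claim to smooth $X$.
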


The notation is justified as $(V_{\O_X},\nabla)$ is necessarily the algebraic flat vector bundle corresponding to $V_\C$ via the Riemann--Hilbert correspondence.  The theorem equivalently says that the solutions to the corresponding algebraic differential equation over a definable open set are definable holomorphic functions, or that the flat coordinates of any algebraic section---for example the period integrals of families of algebraic varieties---are definable:
\begin{cor}\label{cor:periods}
Let $f:X\to Y$ be a smooth projective family and $\omega\in H^0(Y,f_*\Omega^p_{X/Y})$ a relative $p$-form.  If $\gamma$ is a section of the degree $p$ homology local system $R_p(f^\an)_*\Q_{X^\an}$ over a definable open set $U$, then the integral
\[\int_{\gamma_u}\omega_u\]
is a $\R_{\an,\exp}$-definable function on $U$.
\end{cor}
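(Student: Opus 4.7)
The plan is to reduce Corollary \ref{cor:periods} directly to Theorem \ref{main}, applied to the cohomology local system $V_\C := R^p(f^\an)_*\C_{X^\an}$ on $Y^\an$.

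First I verify the hypothesis of Theorem \ref{main}. By the classical monodromy theorem for a smooth projective family, the local monodromy of $V_\C$ around any boundary divisor of a log smooth compactification of a resolution of $Y$ is quasi-unipotent; in particular its eigenvalues are roots of unity and hence of complex norm one. Thus $V_\C$ and its dual $V_\C^\vee = R_p(f^\an)_*\C_{X^\an}$ both have norm one eigenvalues at infinity.

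Next I rewrite the period as a natural pairing. The form $\omega \in H^0(Y, f_*\Omega^p_{X/Y})$ represents a class $[\omega]$ in the algebraic relative de Rham cohomology bundle $\mathcal{H}^p := R^p f_*(\Omega^\bullet_{X/Y}, d)$ equipped with its Gauss--Manin connection; by the relative algebraic de Rham theorem, $(\mathcal{H}^p, \nabla_{\mathrm{GM}})$ is the algebraic flat bundle corresponding under the Riemann--Hilbert correspondence to $V_\C$. The period integral $\int_{\gamma_u}\omega_u$ is, by definition, the value at $u$ of the natural contraction of the flat section $\gamma$ of $V_\C^\vee$ with the algebraic section $[\omega]$ of $\mathcal{H}^p$.

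By Theorem \ref{main} the definabilization of $\mathcal{H}^p$ is canonically isomorphic over $\R_{\an,\exp}$ to $\O_{Y^\df}\otimes_{\C_{Y^\df}} V_\C$. Under this isomorphism the algebraic section $[\omega]$ becomes a definable section of the right-hand side, so its coefficients in any local flat frame are $\R_{\an,\exp}$-definable holomorphic functions. Contracting with the flat section $\gamma$ is an $\O_{U^\df}$-linear operation which, in the flat frame, amounts to a constant-coefficient projection onto one of these coordinates; the resulting function on $U$ is therefore $\R_{\an,\exp}$-definable and by construction equals $u \mapsto \int_{\gamma_u}\omega_u$. The only substantive input beyond Theorem \ref{main} is the verification of the monodromy hypothesis, so no genuine obstacle arises.
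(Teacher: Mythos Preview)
Your proposal is correct and follows essentially the same route as the paper: identify $V_{\O_Y}$ with algebraic de Rham cohomology equipped with the Gauss--Manin connection, view $\int_\gamma$ as a flat (hence definable) section of the dual and $[\omega]$ as an algebraic section, then apply Theorem~\ref{main} so that the pairing is definable. You are slightly more explicit than the paper in justifying the norm-one-eigenvalues hypothesis via the monodromy theorem and in spelling out the contraction in a flat frame, but the argument is the same.
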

\begin{proof}The algebraic structure on the flat vector bundle associated to $V_\C=R^p(f^\an)_*\C_{X^\an}$ is algebraic de Rham cohomology $V_{\O_X}=R^pf_*\Omega^\bullet_{X/Y}$, equipped with the Gauss--Manin connection.
Integration $\int_{\gamma}$ is a section of $V_\C^\dual$, hence a definable section of $\O_{X^\df}\otimes_{\C_X}V^\vee_\C$, while $\omega$ is an algebraic section of $V_{\O_X}$.  As $V_\C$ has norm one eigenvalues at infinity, $(V_{\O_X})^\df\cong \O_{X^\df}\otimes_{\C_X}V^\vee_\C$, and the claim follows.
\end{proof}

\subsection*{Notation}  All of our analytic spaces, definable analytic spaces, and algebraic spaces will be over $\C$ and assumed to be separated.  In the algebraic category, our algebraic spaces will be in addition, of finite type.  We use the symbol ``$\Subset$" to mean ``relatively compact open subspace", with ``subspace" to be interpreted as either an analytic subspace or a definable analytic subspace depending on context.

\section{Definable structures on compact analytic spaces}\label{sec:defstruct}
Throughout this section we work over an o-minimal structure containing $\R_\an$.  Recall that this means any overconvergent real analytic function on a euclidean ball is definable.  The main result is to show that any compact analytic space admits a unique definable analytic structure, and that the two categories of coherent sheaves are naturally equivalent.  We refer to \cite{bbt} for details on definable analytic spaces.  

Recall that a definable topological space $\X$ is a topological space equipped with an equivalence class of finite atlases by definable open subsets of euclidean space with definable transition functions.  The definable site $\underline{\X}$ of a definable topological space $\X$ is the category of definable open subsets whose coverings are finite (definable) open coverings and we refer to sheaves on $\underline \X$ as just sheaves on $\X$. A definable analytic space $\X$ is a definable topological space $|\X|$ with a sheaf of local $\C$-algebras $\O_\X$ on $|\X|$ which is on a covering isomorphic to the zero locus $\V=V(f_1,\ldots,f_k)\subset \U\subset\C^n$ of finitely many definable holomorphic functions $f_1,\ldots,f_k:U\to\C$ on a definable open $\U\subset \C^n$ equipped with the sheaf $\O_{\C^n}/(f_1,\ldots,f_k)\O_{\C^n}|_\V$ on $\V$ as a definable topological space.  Here $\O_{\C^n}$ is the sheaf of definable holomorphic functions on $\C^n$ as a definable topological space in the obvious way.  We say a definable analytic space $\X$ is compact if the ordinary underlying topological space is compact.

There is a natural analytification functor
    \[(-)^\an:(\mbox{definable analytic spaces})\to (\mbox{analytic spaces}).\]  The underlying topological space of $\X^\an$ is the ordinary topological space underlying $\X$ and $\O_{\X^\an}$ is in this case (since we work over an o-minimal structure containing $\R_\an$) the sheafification of $\O_\X$ in the euclidean topology.  Likewise there is a natural analytification functor
    \[(-)^\an:\Coh(\X)\to\Coh(\X^\an)\]
    which is just sheafification in the euclidean topology.  We say objects or morphisms in the essential image of either analytification functor are definabilizable.  
    
   Both analytification functors are faithful \cite{bbt}, but are in general far from equivalences.  In the case of compact spaces, however, we have the following:
    \begin{prop}\label{cpt}  \hspace{1in}
\begin{enumerate}
    \item\label{cpt1} The restriction of the analytification functor to the full subcategories of compact spaces
    \[(-)^\an:(\mbox{compact definable analytic spaces})\to (\mbox{compact analytic spaces})\]
    is an equivalence of categories.
    \item\label{cpt2}   For any compact definable analytic space $\X$, the analytification functor
    \[(-)^\an:\Coh(\X)\to\Coh(\X^\an)\]
    is an equivalence of abelian categories.
\end{enumerate}
\end{prop}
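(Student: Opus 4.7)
The plan is to observe that faithfulness in both parts is already established in \cite{bbt}, so the remaining content is fullness and essential surjectivity (with the uniqueness in (\ref{cpt1}) reducing to fullness applied to the identity). The single technical input driving everything is the following elementary observation: over any o-minimal structure containing $\R_\an$, a holomorphic function on a euclidean open $\V\subset\C^n$ becomes definable when restricted to any relatively compact definable open $\U\Subset\V$, because a finite cover of $\overline{\U}$ by closed polydiscs contained in $\V$ exhibits $f|_\U$ as a finite gluing of restricted analytic functions.

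For (\ref{cpt1}), I would first establish essential surjectivity. Given a compact analytic space $Z$, choose a finite cover $Z=\bigcup_i V_i$ with $V_i\Subset W_i$, where each $W_i$ is realized as a closed analytic subspace $V(f_1^{(i)},\ldots,f_{k_i}^{(i)})$ of a euclidean open $\Omega_i\subset\C^{n_i}$. After shrinking so that $V_i$ is cut out in a definable open $\Omega_i'\Subset\Omega_i$ by the restrictions of the $f_j^{(i)}$, the key observation makes those restrictions definable, and $V_i$ acquires the structure of a definable analytic space. The transition maps between $V_i$ and $V_j$ are holomorphic on relatively compact euclidean opens, hence also definable, so they glue to a definable analytic space $\X$ with $\X^\an=Z$. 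For fullness, a morphism $f:\X_1^\an\to\X_2^\an$ is described locally, after choosing definable charts on $\X_2$ and relatively compact definable refinements on $\X_1$ whose analytic closures land in the preimage, as holomorphic pullbacks of the definable coordinate functions on $\X_2$; the key observation again shows these are definable.

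For (\ref{cpt2}), essential surjectivity proceeds along the same lines. Given $\F\in\Coh(\X^\an)$ with $\X$ compact definable, choose a finite cover $\X=\bigcup_i\U_i$ and enlargements $\U_i\Subset\U_i'$ on which $\F|_{\U_i'^\an}$ admits a finite presentation $\O_{\U_i'^\an}^{a_i}\to\O_{\U_i'^\an}^{b_i}$. The matrix entries are holomorphic on $\U_i'^\an$ and become definable upon restriction to $\U_i$ by the key observation, so $\F|_{\U_i^\an}$ lifts to a definable coherent sheaf $\F_i$ on $\U_i$. The gluing isomorphisms on overlaps are morphisms of coherent analytic sheaves over compact definable analytic subspaces, hence definable by the fullness part of (\ref{cpt2}) applied there, and their cocycle identity descends via faithfulness; this produces a global definable lift of $\F$. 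Fullness of (\ref{cpt2}) is a direct matrix-of-holomorphic-functions argument on a relatively compact refinement. Finally, the equivalence respects the abelian structure because analytification is a flat extension of structure sheaves, so it is exact on coherent modules and reflects exactness by faithfulness.

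The main obstacle is the bookkeeping required to simultaneously choose definable charts, relatively compact definable refinements, and compatible local presentations so that the gluing step goes through. Once the key observation above is isolated, both parts reduce to repeated applications of faithfulness from \cite{bbt} together with the finiteness afforded by compactness of $\X$.
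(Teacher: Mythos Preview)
Your approach is essentially the same as the paper's: faithfulness from \cite{bbt}, then fullness and essential surjectivity via the observation that holomorphic data becomes $\R_\an$-definable upon restriction to a relatively compact definable open, applied to local models, transition maps, and matrices presenting coherent sheaves. The paper packages these into two standalone lemmas (one for fullness, one for essential surjectivity, each stated for an arbitrary relatively compact definable open $\U\Subset\X$ rather than for compact $\X$), which in particular lets it glue over overlaps $\U_i\cap\U_j\Subset\U_i'\cap\U_j'$ without the circular-sounding appeal to ``fullness of (\ref{cpt2}) over compact subspaces'' you make---note those overlaps are not compact, so you should phrase that step via the relatively compact inclusion instead.
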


From classical GAGA \cite{Serre} (see \cite[Th\'eor\`eme 5.10]{toe} for the statement for algebraic spaces) we immediately deduce the following:
\begin{cor}
Let $X$ be a proper complex algebraic space.  Then the three functors
\[\begin{tikzcd}
\Coh(X)\arrow{rd}[swap]{(-)^\an}\ar{rr}{(-)^\df}&&\Coh(X^\df)\ar{ld}{(-)^\an}\\
&\Coh(X^\an)&
\end{tikzcd}
\]
are equivalences of abelian categories.
\end{cor}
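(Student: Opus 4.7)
The plan is to deduce the corollary immediately from Proposition \ref{cpt}(\ref{cpt2}) together with classical GAGA by a two-out-of-three argument for equivalences in the displayed commutative triangle. It will suffice to show that two of the three edges are equivalences and that the triangle itself commutes up to natural isomorphism.

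First, since $X$ is proper, its underlying topological space $X^\an$ is compact, and therefore $X^\df$ is a compact definable analytic space. Proposition \ref{cpt}(\ref{cpt2}) then gives that
\[(-)^\an:\Coh(X^\df)\to\Coh(X^\an)\]
is an equivalence of abelian categories. Second, I would invoke GAGA for proper algebraic spaces---Serre's original theorem in the scheme case, extended to algebraic spaces via the cited \cite[Th\'eor\`eme 5.10]{toe}---to conclude that $(-)^\an:\Coh(X)\to\Coh(X^\an)$ is an equivalence as well.

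The only content that remains is verifying that the triangle commutes up to natural isomorphism, i.e.\ that for an algebraic coherent sheaf $\mathcal{F}$ on $X$ there is a canonical identification $(\mathcal{F}^\df)^\an\cong \mathcal{F}^\an$. This should fall directly out of the construction of the two analytification functors from \cite{bbt}: $\mathcal{F}^\df$ is built by restricting $\mathcal{F}$ to the definable site with the same local algebraic generators, and the subsequent analytification is sheafification in the (finer) Euclidean topology, which recovers $\mathcal{F}^\an$. With the triangle commutative and two of its edges equivalences, the third edge $(-)^\df:\Coh(X)\to\Coh(X^\df)$ is automatically an equivalence too.

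I expect no substantial obstacle: the geometric content has already been packaged into Proposition \ref{cpt}(\ref{cpt2}) and GAGA for algebraic spaces. The only step requiring any care is the naturality of the triangle, which is bookkeeping on the definitions of the analytification functors and their compatibility with the inclusion of sites (definable into Euclidean).
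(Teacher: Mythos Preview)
Your proposal is correct and matches the paper's approach exactly: the paper states the corollary as an immediate consequence of Proposition \ref{cpt} together with classical GAGA for algebraic spaces, which is precisely the two-out-of-three argument you spell out. The only additional content you supply is the routine verification that the triangle commutes, which the paper takes for granted.
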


    We prove Proposition \ref{cpt} via two slightly more general lemmas.  The first establishes the fullness of the analytification functors up to restricting to a relatively compact open subspace.  
    
    \begin{lemma}\label{full}  Let $\XX$ be an analytic space, $\X,\Y$ definable analytic spaces, and $\E,\F$ coherent sheaves on $\X$.
\begin{enumerate}
    \item\label{full1} Every point $x\in \XX$ admits a definabilizable relatively compact open neighborhood $x\in \UU\Subset \XX$.
    \item\label{full2} For any morphism $\phi:\X^\an\to \Y^\an$ and any definable relatively compact open $\U\Subset \X$, the restriction $\phi|_{\U^\an}:\U^\an\to \Y^\an$ is definabilizable. 
    \item\label{full3} For any morphism $\phi:\E^\an\to \F^\an$ and any definable relatively compact open $\U\Subset \X$,  the restriction $\phi|_{\U^\an}:\E^\an\to \F^\an$ is definabilizable.
\end{enumerate}
\end{lemma}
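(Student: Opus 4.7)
The common principle behind all three parts is overconvergence: a holomorphic function on a relatively compact open in an analytic space extends slightly past its closure, and is therefore definable in $\R_\an$. I would apply this in turn to local charts, to morphisms of spaces, and to matrix lifts of morphisms of coherent sheaves. In each case, faithfulness of analytification from \cite{bbt} takes care of consistency on overlaps.

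For \eqref{full1}, choose an analytic neighborhood $V$ of $x$ in $\XX$ realized as a closed analytic subspace of an open polydisc $\Omega\subset\C^n$, cut out by holomorphic functions $f_1,\ldots,f_k$ on $\Omega$. Pick an open polydisc $\Delta\Subset\Omega$ containing $x$; each $f_i|_\Delta$ is overconvergent and hence definable in $\R_\an$. Let $\UU$ be the definable analytic subspace of $\Delta$ cut out by these restrictions. Then $\UU^\an=V\cap\Delta$ is a relatively compact open neighborhood of $x$ in $\XX$.

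For \eqref{full2}, fix finite definable atlases $\{\X_i\}$, $\{\Y_j\}$ with each $\X_i$ and $\Y_j$ embedded definably in some $\C^{n_i}$ and $\C^{m_j}$. For each $x\in\overline{\U}$ pick indices $i=i(x)$, $j=j(x)$ with $x\in\X_i$ and $\phi(x)\in\Y_j$, and use continuity of $\phi$ together with \eqref{full1} to produce an open polydisc $B_x\Subset\X_i$ containing $x$ whose closure maps into $\Y_j^\an$ under $\phi$. By compactness of $\overline{\U}$, finitely many $B_\alpha$ cover it. On each $B_\alpha\cap\X_{i(\alpha)}$ the morphism $\phi$ is encoded by the pullbacks of the $m_{j(\alpha)}$ ambient coordinates of $\Y_{j(\alpha)}\subset\C^{m_{j(\alpha)}}$; these are holomorphic on an analytic neighborhood of $\overline{B_\alpha}$ and so overconvergent, hence definable, on $B_\alpha$. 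By faithfulness of analytification the local definable morphisms agree on overlaps, gluing to a definable morphism on a definable open containing $\U$ whose restriction to $\U$ provides the claimed definabilization.

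For \eqref{full3}, refine to a finite cover of $\overline{\U}$ by relatively compact definable opens $B_\alpha\Subset\X$ on which both $\E$ and $\F$ admit finite free presentations $\O_\X^{p_\alpha}\xrightarrow{A_\alpha}\O_\X^{q_\alpha}\to\E|_{B_\alpha}\to 0$ and $\O_\X^{p'_\alpha}\to\O_\X^{q'_\alpha}\to\F|_{B_\alpha}\to 0$. Shrinking in the analytic topology if needed, the map $\phi$ lifts to a morphism $\O_{\X^\an}^{q_\alpha}\to\O_{\X^\an}^{q'_\alpha}$ recorded by a matrix $M_\alpha$ of holomorphic functions on an analytic neighborhood of $\overline{B_\alpha}$; such a lift exists because free modules lift surjections. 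Overconvergence makes $M_\alpha$ a definable matrix on $B_\alpha$, and the identity that $M_\alpha\circ A_\alpha$ lies in the image of the presentation of $\F$---the condition that $M_\alpha$ descends to $\E|_{B_\alpha}\to\F|_{B_\alpha}$---holds analytically and thus definably by faithfulness. Gluing on overlaps once more produces the required definable $\E|_\U\to\F|_\U$ analytifying to $\phi$.

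The main technical obstacle is the combinatorial cover-choosing step in \eqref{full2} and \eqref{full3}: one must simultaneously arrange a single finite cover of $\overline{\U}$ by definable opens whose closures are compatibly contained in charts of $\X$ and $\Y$ (resp.\ in definable free presentations of $\E$, $\F$ and in analytic lifts of $\phi$), so that every holomorphic function or matrix entry encoding the morphism is actually defined on a slightly larger open and therefore overconvergent. Once the cover is set up, the rest is formal.
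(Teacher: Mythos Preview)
Your argument is correct and essentially identical to the paper's: both hinge on the fact that a holomorphic function on a neighborhood of a compact set is $\R_\an$-definable on any relatively compact open inside, applied in turn to the defining equations of a local model, to the coordinate functions of a morphism, and to the matrix entries of a lift between free presentations, with faithfulness of analytification handling the gluing. The paper packages your ``combinatorial cover-choosing step'' as a single observation---any open cover of $\U^\an$ can be refined by finitely many definable $\X_i'\Subset\X_i$ with the $\X_i'$ covering $\U$---and then reduces each of \eqref{full2} and \eqref{full3} to the single-chart case $\X_i'\Subset\X_i$; this is exactly what you are doing, just phrased more uniformly.
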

\begin{proof}
For part \eqref{full1} we may assume $\XX$ is locally the zero locus $V(I)\subset \WW\subset(\C^n)^\an$ of a finitely generated ideal $I=(f_1,\ldots,f_m)\subset \O_{(\C^n)^\an}(\WW)$ of holomorphic functions on an open subset $\WW\subset (\C^n)^\an$.  The topology of $(\C^n)^\an$ has a basis by relatively compact definable open subsets (for example euclidean balls) and for each definable open $(\W')^\an\Subset \WW\subset(\C^n)^\an$, the restrictions $f_i|_{\W'^\an}$ are $\R_\an$-definable and hence $V(I)|_{\W'^\an}$ is definabilizable open and $V(I)|_{\W'^\an}\Subset V(I)\subset \XX$.  This proves part \eqref{full1}.

For parts \eqref{full2}, \eqref{full3}, by the faithfulness of analytification the claim is local on $\U$.  Observe that by part \eqref{full1} and the relative compactness of $\U$, any collection of open subspaces of $\X^\an$ covering $\U^\an$ can be refined by finitely many open $\X_i\subset \X$ and $\X_i'\Subset \X_i$ such that the $\X_i'$ cover $\U$.  It then suffices to prove the claim replacing $\U\subset \X$ with $\X_i'\subset \X_i$.  

Applying this observation to $\phi^{-1}(\Y_i^\an)$ for a definable cover $\Y_i$ of $\Y$, we may assume $\Y$ is a local model $V(I)\subset \W\subset \C^n$; applying it to a definable cover of $\X$, we may assume $\X$ is a local model $V(I')\subset \W'\subset\C^m$, and moreover that $f$ is given by a holomorphic map $g:\W'^\an\to \W^\an$ with $g^\sharp I\subset I'$.  We then have that $g|_{\U}$ is $\R_\an$-definable, and $f$ is the analytification of the induced morphism $\X\to \Y$.  This yields part \eqref{full2}

Likewise, by passing to a covering of $\X$ we may first assume $\X$ is a local model $V(I)\subset \W\subset\C^n$, next that $\E$ and $\F$ are both quotients of $\O_\W^k$, and finally that $\phi:\E^\an\to \F^\an$ lifts to a morphism $\O_{\W^\an}^k\to\O_{\W^\an}^k$.  We therefore reduce to $\X=\W\subset\C^n$ a definable open subset and $\E=\F=\O_\X^k$.  The morphism $\phi$ is given by a matrix of holomorphic functions on $\X^\an$, which when restricted to $\U^\an$ are $\R_\an$-definable.

\end{proof}

The next lemma handles the essential surjectivity of the analytification functors.
\begin{lemma}\label{ess}Let $\XX$ be an analytic space, $\X$ a definable analytic space, $\EE$ a coherent sheaf on $\X^\an$.
\begin{enumerate}
    \item\label{ess1} Any relatively compact open $\UU\Subset\XX$ is relatively compact in a definabilizable relatively compact open $\UU'\Subset \XX$.
    \item\label{ess2} For any definable relatively compact $\U\Subset \X$, the restriction $\EE|_{\U^\an}$ is definabilizable.
\end{enumerate}
\end{lemma}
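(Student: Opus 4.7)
The plan for both parts is a local-to-global gluing, using the fullness lemma (Lemma \ref{full}) to produce the definable transition data on relatively compact pieces, and using finiteness of the cover so that the resulting atlas is genuinely a definable one.

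For part \eqref{ess1}, I would first invoke Lemma \ref{full}\eqref{full1} to cover each point of the compact set $\overline{\UU}$ by a definabilizable relatively compact open in $\XX$; by compactness finitely many such $\UU_1,\ldots,\UU_n\Subset\XX$ suffice. Within each definable structure I would then choose $\UU_i'\Subset\UU_i$ relatively compact and definable, shrunk enough that the $\UU_i'$ still cover $\overline{\UU}$. The key geometric observation is that this shrinking forces each overlap $\UU_i'\cap\UU_j'$ to be relatively compact in $\UU_i\cap\UU_j$, since its closure is contained in $\overline{\UU_i'}\cap\overline{\UU_j'}\subset\UU_i\cap\UU_j$. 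Lemma \ref{full}\eqref{full2} applied to the identity $\UU_i\cap\UU_j\to\UU_i\cap\UU_j$ then produces definabilizable isomorphisms between the two definable structures induced on $\UU_i'\cap\UU_j'$, and the cocycle condition on triple overlaps is automatic because all transitions are restrictions of the identity. Gluing the resulting finite definable atlas yields a definable analytic space whose analytification is $\UU':=\bigcup_i\UU_i'$, and by construction $\UU\Subset\UU'\Subset\XX$.

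For part \eqref{ess2} the strategy is parallel, with $\EE$ playing the role of the structure sheaf. I would cover $\overline{\U}$ by finitely many definable opens $\W_i\subset\X$ on which $\EE$ admits a presentation
\[\O_{\W_i^\an}^{a_i}\xrightarrow{\phi_i}\O_{\W_i^\an}^{b_i}\to\EE|_{\W_i^\an}\to 0\]
(using coherence of $\EE$ together with compactness of $\overline{\U}$). After passing to definable opens $\W_i''\Subset\W_i'\Subset\W_i$ with the $\W_i''$ still covering $\overline{\U}$, Lemma \ref{full}\eqref{full3} makes each $\phi_i|_{\W_i'^\an}$ definabilizable, so its cokernel defines a definable coherent sheaf $\E_i'$ on $\W_i'$ with $(\E_i')^\an\cong\EE|_{\W_i'^\an}$. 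On each overlap $\W_i''\cap\W_j''$, which as above is relatively compact in $\W_i'\cap\W_j'$, the canonical analytic isomorphism between the restrictions of $\E_i'$ and $\E_j'$ is definabilizable by a second application of Lemma \ref{full}\eqref{full3}. The cocycle condition is again automatic, since all the isomorphisms are the tautological identifications induced by the common analytification $\EE$. The pieces $\E_i'|_{\W_i''}$ therefore glue to a definable coherent sheaf on $\bigcup_i\W_i''\supset\U$, whose restriction to $\U$ is the desired definabilization of $\EE|_{\U^\an}$.

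The main obstacle I expect is not conceptual but organizational: the shrinkings $\UU_i'\Subset\UU_i$ and $\W_i''\Subset\W_i'\Subset\W_i$ must be chosen simultaneously to preserve covers of the compact sets $\overline{\UU}$ and $\overline{\U}$ and to make pairwise intersections relatively compact in the next-larger opens, which is exactly the hypothesis needed to apply Lemma \ref{full}. Once these choices are made, the essential input — finiteness of the atlas, which is built into the definition of a definable analytic space — is what lets the local data glue, and the vanishing of higher cohomology issues is sidestepped because only pairwise gluing data enters.
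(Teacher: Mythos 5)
Your part \eqref{ess2} is fine and is essentially the paper's own argument: cover $\overline{\U}$ by definable opens of $\X$ carrying presentations, definabilize the presentation matrices and the gluing maps on the (automatically definable) overlaps via Lemma \ref{full}\eqref{full3}, and use faithfulness for the cocycle condition.

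Part \eqref{ess1}, however, has a genuine gap at the transition-function step, and it is exactly the crux of the lemma. You write that Lemma \ref{full}\eqref{full2} ``applied to the identity $\UU_i\cap\UU_j\to\UU_i\cap\UU_j$'' produces definabilizable isomorphisms between ``the two definable structures induced on $\UU_i'\cap\UU_j'$.'' But Lemma \ref{full}\eqref{full2} requires its source and target to be analytifications of definable analytic spaces and the restriction locus to be a \emph{definable} relatively compact open of the source. Here $\UU_i\cap\UU_j$ is the intersection of two open sets carrying a priori unrelated definable structures; it is not known to be definabilizable, and $\UU_i'\cap\UU_j'$ is not known to be a definable open subset of either chart ($\UU_j'$ has no reason to be a definable subset in the atlas of $\UU_i$). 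So there are no ``induced definable structures'' on the overlap to compare, and the lemma cannot be invoked as stated; relative compactness of $\UU_i'\cap\UU_j'$ in $\UU_i\cap\UU_j$ alone does not supply the missing definability. This is precisely why the paper does not glue along intersections: its Claim instead \emph{enlarges}, interposing a definable open $\B\Subset\X_1$ with $\XX_1'\cap\XX_2'\Subset\B^\an\Subset\XX_1\cap\XX_2$, using Lemma \ref{full}\eqref{full2} to realize $\B$ definably inside $\X_2$ via an open immersion $j$, and then choosing $\Y\Subset\X_2$ with $\Y^\an\cap\XX_1'=j(\B)^\an\cap\XX_1'$ so that the glued space $\X_1'\cup_\B\Y$ is an honest definabilizable open neighborhood of $\XX_1'\cup\XX_2'$; the result then follows by induction on the number of charts. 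Your outline could be repaired along the same lines (use such a $\B$ to transport $\U_j'$ into the $\U_i$-structure and thereby show the overlap and the transition map are definable), but that auxiliary construction is the actual content of the proof and is missing from your argument.
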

\def\cpt{\mathrm{cpt}}
\begin{proof}
For part \eqref{ess1}, as in Lemma \ref{full} we may take finitely many open definabilizable $\XX_i\subset\XX$ with definabilizable $\XX_i'\Subset \XX_i$ such that the $\XX_i'$ cover $\UU$.  The proof is completed by inductively applying the following:
\begin{claim}
For $i=1,2$, let $\XX_i\subset \XX$ be a definabilizable open subspace with definabilizable $\XX_i'\Subset\XX_i$. Then there exists a definabilizable $\XX''$ such that $\XX_1'\cup\XX_2'\Subset \XX''\Subset \XX_1\cup\XX_2$.
\end{claim}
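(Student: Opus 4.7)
The plan is to cover the compact set $K := \overline{\XX_1'}\cup\overline{\XX_2'}$ by finitely many definabilizable relatively compact open subspaces of $\XX$, each contained relatively compactly in $\XX_1$, $\XX_2$, or their intersection, and to glue them into a definable analytic atlas using Lemma~\ref{full}.

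First I would choose intermediate definabilizable shrinkings $\XX_i'\Subset\U_i'\Subset\U_i\Subset\XX_i$ inside each $\XX_i$'s definable structure, and partition $K$ into three pairwise disjoint closed pieces
\[K_1 := \overline{\XX_1'}\setminus\U_2,\qquad K_2 := \overline{\XX_2'}\setminus\U_1,\qquad K_3 := K\cap\U_1\cap\U_2.\]
For each $x\in K$, by Lemma~\ref{full}\eqref{full1} I pick a definabilizable relatively compact open neighborhood $N_x\subset\XX$ of $x$, small enough that if $x\in K_1$ then $N_x\Subset\U_1$ and $N_x\cap\overline{\U_2'}=\emptyset$; if $x\in K_2$ then $N_x\Subset\U_2$ and $N_x\cap\overline{\U_1'}=\emptyset$; and if $x\in K_3$ then $N_x\Subset\U_1\cap\U_2$. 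By compactness a finite subcollection $N_1,\ldots,N_m$ together with slight shrinkings $N_i'\Subset N_i$ still covers $K$, and I would set $\XX'':=\bigcup_i N_i'$; the containments $\XX_1'\cup\XX_2'\Subset\XX''\Subset\XX_1\cup\XX_2$ follow from $K\subset\bigcup_i N_i'$ and $\bigcup_i N_i\Subset\U_1\cup\U_2\Subset\XX_1\cup\XX_2$.

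Next I would use $\{N_i'\}$ as an atlas and argue that every pairwise transition is definable. By Lemma~\ref{full}\eqref{full2} applied to the analytic inclusion $N_i^\an\hookrightarrow\U_k^\an$ (for whichever $\U_k$ contains $N_i$ relatively compactly), the definable structure on $N_i$ is compatible with the one inherited from $\U_k$, so transitions between two charts lying in a common $\U_k$ are definable via the atlas of $\U_k$. The remaining case is a cross-transition between $N_i\Subset\U_1$ (of $K_1$-type) and $N_j\Subset\U_2$ (of $K_2$-type); the disjointness conditions force $N_i\cap N_j\subset(\U_1\cap\U_2)\setminus(\overline{\U_1'}\cup\overline{\U_2'})$, lying in the bridging region $\U_1\cap\U_2$. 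Any non-empty such overlap I would handle by inserting finitely many further bridge charts $N_z\Subset\U_1\cap\U_2$ obtained from Lemma~\ref{full}\eqref{full1} and covering $\overline{N_i'\cap N_j'}$.

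The hardest part is this last compatibility check. Its resolution hinges on the observation that an overlap between a $\U_1$-chart and a $\U_2$-chart always lies in $\U_1\cap\U_2$, so on any neighborhood $N_z$ relatively compact there, Lemma~\ref{full}\eqref{full2} applied to the two inclusions $N_z\hookrightarrow\U_1$ and $N_z\hookrightarrow\U_2$ identifies the two inherited definable structures via a definable analytic isomorphism (the identity on the underlying analytic space); since the bridges cover the problematic overlaps, the sheaf of definable analytic functions glues to make the atlas a bona fide definable analytic structure on $\XX''$.
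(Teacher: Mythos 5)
Your strategy (a finite cover of $K$ by definabilizable charts, glued into an atlas) is genuinely different from the paper's, but as written it has a gap at exactly the point you flag as hardest. For an atlas to define a definable analytic space you need more than that the two definable structures ``agree'' near each overlap: you need each pairwise overlap $N_i'\cap N_j'$ to be a \emph{definable} open subset of the charts, with definable transition map. For two charts that both sit relatively compactly inside a single $\U_k$ this is fine (Lemma \ref{full}\eqref{full2} makes each inclusion the analytification of a definable open immersion into $\X_k$, and overlaps are then preimages of definable images), and the same works for an overlap between a chart $\Subset\U_1\cap\U_2$ and a chart on either side. The problem is a cross overlap between a $K_1$-type chart (relatively compact only in $\U_1$) and a $K_2$-type chart (relatively compact only in $\U_2$): your disjointness conditions $N_i\cap\overline{\U_2'}=\emptyset$, $N_j\cap\overline{\U_1'}=\emptyset$ do \emph{not} force $N_i\cap N_j=\emptyset$, and for such a pair neither chart maps definably into a common ambient definable structure, so $N_i'\cap N_j'$ is an intersection of an $\X_1$-definabilizable open with an $\X_2$-definabilizable open --- precisely the kind of set that need not be definable in either structure (this is the whole difficulty of the Claim). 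Inserting bridge charts $N_z\Subset\U_1\cap\U_2$ that \emph{cover} the overlap does not help: covering $N_i'\cap N_j'$ by other charts does not make that set definable from either side, and ``the sheaf of definable analytic functions glues'' presupposes the definable topological space whose existence is at stake. This is exactly the issue the paper's proof engineers away by shrinking the $\X_2$-side piece $\Y$ so that $\Y^\an\cap\XX_1'$ is contained in the bridge $\B^\an$, where the identification $j$ transports definability, making the overlap definable in both structures; your construction has no analogous arrangement.

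The good news is that your approach can be repaired with a small change rather than abandoned: the sets $K_1=\overline{\XX_1'}\setminus\U_2$ and $K_2=\overline{\XX_2'}\setminus\U_1$ are disjoint compacta, so you may fix disjoint open neighborhoods $W_1\supset K_1$, $W_2\supset K_2$ and require $K_1$-type charts to lie in $W_1$ and $K_2$-type charts in $W_2$. Then every nonempty cross overlap involves a chart relatively compact in $\U_1\cap\U_2$, where Lemma \ref{full}\eqref{full2} applies to both inclusions; you should also note explicitly that the image of the resulting definable open immersion into $\X_k$ is a definable open subset (images of definable sets under definable maps are definable), so that overlaps and transitions are definable in each chart. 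With that modification your covering argument gives a correct, somewhat more symmetric alternative to the paper's single-bridge pushout $\X_1'\cup_\B\Y$, at the cost of a more elaborate bookkeeping of chart types.
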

\begin{proof}
By Lemma \ref{full}\eqref{full2}, we may suppose $\XX_i'\subset\XX_i$ is the analytification of $\X_i'\subset \X_i$.  We have that $\XX_1'\cap \XX_2'\Subset\XX_1\cap \XX_2$, hence there is a definable $\B\Subset \X_1$ for which $\XX_1'\cap\XX_2'\Subset \B^\an\Subset \XX_1\cap \XX_2$ under the identification $\X_1^\an\cong \XX_1$.  By Lemma \ref{full}\eqref{full2}, the composition $\B^\an\subset \XX_1\cap\XX_2\to \XX_2\cong \X_2^\an$ is the analytification of an open immersion $j:\B\to \X_2$.  Let $\Y\Subset \X_2$ be a definable open subspace such that $\X_2'\Subset \Y$ and $\Y^\an\cap \XX_1'=j(\B)^\an\cap \XX_1'$ under the identification $\X_2^\an\cong \XX_2$, which is possible since $\XX_2'\setminus j(B)^\an$ is relatively compact in $\XX_2\setminus\overline{\XX_1'}$.  Then we have a natural open immersion $(\X_1'\cup_\B\Y)^\an\to \XX$ with the required properties.    

\end{proof}
For part \eqref{ess2}, we may likewise refine any cover of $\X$ by finitely many open definable $\X_i\subset \X$ with $\X_i''\Subset \X_i'\Subset \X_i$ such that the $\X''_i$ cover $\U$.  Then provided each $\EE|_{\X_i^\an}$ is definabilizable, the gluing maps on $(\X_i''\cap\X_j'')^\an$ will be definabilizable by Lemma \ref{full}\eqref{full3} (uniquely by the faithfulness of analytification), and it will follow that $\EE|_{\U^\an}$ is definabilizable.  We may therefore assume we have a presentation
\[\O_{\X^\an}^{m}\xrightarrow{\phi}\O_{\X^\an}^{n}\to \EE\to 0.\]
By Lemma \ref{full}, $\phi|_{\U}$ is definabilizable, and as analytification is exact it follows that $\EE|_{\U^\an}$ is definabilizable.

\end{proof}

\begin{proof}[Proof of Proposition \ref{cpt}]
We already know faithfulness.  Lemma \ref{full} yields the fullness, and Lemma \ref{ess} the essential surjectivity.
\end{proof}

\section{Proof of Theorem \ref{main}}

In this section we prove Theorem \ref{main}.  The main input is to show that the Deligne canonical extension can be formed definably, provided the monodromy has norm one eigenvalues at infinity.  We work throughout over an o-minimal structure containing $\R_{\an,\exp}$.

On a smooth analytic space $\XX$, the categories of complex local systems on $\XX$ and locally free $\O_\XX$-modules with flat connection are naturally equivalent via $V_\C\mapsto (V_{\O_\XX},\nabla)$.  Suppose $(\YY,\DD)$ is a log smooth pair with $\YY\setminus \DD=\XX$ and that $(V_{\O_\XX},\nabla)$ is a locally free $\O_\XX$-module with flat connection.  According to \cite{deligneext}, for any choice of section $\Sigma\subset \C$ of the complex exponential $e(\cdot)=e^{2\pi i\cdot}:\C\to\C^*$ there is an extension $( V_{\O_\YY},\nabla)$ of $(V_{\O_\XX},\nabla)$ to $\YY$ as a locally free $\O_{\YY}$-module with flat logarithmic connection, unique up to unique isomorphism, such that the eigenvalues of $\Res\nabla$ are contained in $\Sigma$ pointwise.  Here by an extension $\bar F$ of a locally free sheaf $F$ on $\XX$ we mean a locally free sheaf together with an isomorphism $\bar F|_{\XX}\cong F$, and we further ask the isomorphism to be compatible with the connection.  

For $X$ a smooth algebraic variety, $V_{\O_{X^\an}}$ a locally free $\O_{X^\an}$-module with flat connection, $\Sigma$ a choice of section of $e$, and $(Y,D)$ a choice of log smooth compactification of $X$, we therefore obtain by classical GAGA a functorial algebraic structure $(V_{\O_{Y}},\nabla)$ on $(V_{\O_{Y^\an}},\nabla)$.  The fact that the connection of $V_{\O_{Y^\an}}$ has logarithmic singularities means in particular that it induces a connection on the associated meromorphic bundle and therefore that the connection is algebraic (with logarithmic singularities) with respect to the algebraic structure $V_{\O_{Y}}$.  We therefore obtain an algebraic structure $(V_{\O_X},\nabla)$ with regular singularities on $(V_{\O_{X^\an}},\nabla)$.  One further shows (see e.g. \cite[IV \S 5]{borel}) that flat sections extend meromorphically, and this implies that analytification yields an equivalence of categories between $\O_X$-modules with flat connection and regular singularities and complex local systems on $X^\an$.

Let $\X$ be a definable analytic space and $V_\C$ a complex local system on $\X^\an$.  The restriction of $V_\C$ to $\X$ naturally sheafifies to $V_\C$ since $\X$ has a definable cover by simply connected open subsets by definable triangulation \cite[Chapter 8, (2.9)]{Vdd}.  We denote the restriction by $V_\C$ as well.  Then $V_{\O_\X}:=\O_{\X}\otimes_{\C_\X}V_\C$ is naturally a definable coherent sheaf with flat connection $\nabla$, and naturally analytifies to $(V_{\O_{\X^\an}},\nabla)$.

\begin{prop}\label{del}Let $(\Y,\D)$ be a definable analytic log smooth pair, and let $\U\Subset \Y$ be a relatively compact definable open subspace.  Set $\X=\Y\setminus \D$, $\W=\U\setminus \D$.  Let $V_\C$ be a complex local system on $\X^\an$ with norm one eigenvalues at infinity with respect to $(\Y,\D)$.  
\begin{enumerate}
    \item\label{del1} There is an extension $(V_{\O_\U},\nabla)$ of $(V_{\O_\W},\nabla)$ as a locally free sheaf with logarithmic connection whose residue has eigenvalues in $\Sigma$. 
    \item\label{del2} Any two such extensions of $(V_{\O_\X},\nabla)$ are  isomorphic by a unique isomorphism (as extensions and compatibly with the connection) over $\U$.
\end{enumerate}
 
\end{prop}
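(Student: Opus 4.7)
My plan is to reduce the proposition to local definable polydisk charts covering $\overline{\U}$, where the Deligne extension admits an explicit construction whose definability over $\R_{\an,\exp}$ follows from the norm-one hypothesis. The extension is constructed locally, glued using local uniqueness, and global uniqueness follows from the same local comparison.

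By the relative compactness $\U\Subset\Y$ and log smoothness of $(\Y,\D)$, cover $\overline{\U}$ by finitely many nested pairs $\V_\alpha^\circ\Subset\V_\alpha\subset\Y$ of definable polydisk charts in which $\D$ takes the normal crossings form $V(z_1\cdots z_{k_\alpha})$. On each $\V_\alpha$ the local monodromies $T_{\alpha,j}$ of $V_\C$ around $z_j=0$ have norm-one eigenvalues, so we may pick commuting matrices $N_{\alpha,j}$ with eigenvalues in $\Sigma$---automatically real---satisfying $\exp(-2\pi i N_{\alpha,j})=T_{\alpha,j}$. Define
\[(V^\alpha,\nabla_\alpha) := \bigl(\O_{\V_\alpha}^{\,r},\ \nabla_\alpha e_k = \textstyle\sum_{j,\ell}(N_{\alpha,j})_{\ell k}\,\frac{dz_j}{z_j}\otimes e_\ell\bigr),\]
a manifestly definable locally free sheaf with log connection and residues in $\Sigma$.

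The key calculation is that $(V^\alpha,\nabla_\alpha)|_{\V_\alpha\cap\X}\cong(V_{\O_\X},\nabla)|_{\V_\alpha\cap\X}$ as definable objects. Cover $\V_\alpha\cap\X$ by finitely many definable simply connected polysectors $U$ obtained by choosing a continuous branch of each $\arg z_j$ with bounded range; on such $U$ the analytic isomorphism to the flat bundle is given by $Z_\alpha(z)=\exp\bigl(-\sum_j N_{\alpha,j}\log z_j\bigr)$, whose entries are $\C$-linear combinations of products $\prod_j r_j^{-\beta_j}e^{-i\beta_j\theta_j}(\log r_j+i\theta_j)^{m_j}$ for real eigenvalues $\beta_j$ of $N_{\alpha,j}$ and $m_j\in\Z_{\geq 0}$. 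Each factor is definable over $\R_{\an,\exp}$: $r_j^{-\beta_j}=\exp(-\beta_j\log r_j)$ via $\R_{\exp}$, and $\sin,\cos$ of the bounded quantity $\beta_j\theta_j$ via $\R_{\an}$; transitions between polysectors are given by constant monodromy matrices. For the local form of part \eqref{del2}, any two definable extensions on $\V_\alpha$ are analytically isomorphic by a unique isomorphism (Deligne), which is definable on $\V_\alpha^\circ$ by Lemma \ref{full}\eqref{full3} applied to $\V_\alpha^\circ\Subset\V_\alpha$. Gluing the $V^\alpha$ on overlaps using this local uniqueness produces a definable extension on a neighborhood of $\overline{\U}$ in $\Y$, whose restriction is $V_{\O_\U}$. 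Global uniqueness on $\U$ follows by the same chart-by-chart comparison: on each $\V_\alpha^\circ\cap\U$, the isomorphism between two definable extensions is the solution to the Fuchsian system $dM=M\Gamma^1-\Gamma^2 M$ with definable coefficients and definable initial data on $\W$, which has the same $\R_{\an,\exp}$-definable form as $Z_\alpha$ thanks to the real residues.

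The principal obstacle is establishing definability of the entries of $Z_\alpha(z)$, which rests entirely on the norm-one hypothesis: real eigenvalues $\beta_j$ of $N_{\alpha,j}$ enable the factorization $\exp(-\beta_j\log z_j)=r_j^{-\beta_j}e^{-i\beta_j\theta_j}$ into an $\R_{\exp}$-definable real power and an $\R_{\an}$-definable bounded trigonometric factor. If instead $\beta_j=a+ib$ with $b\neq 0$, the term $\cos(b\log r_j)$ would oscillate as $\log r_j\to-\infty$, which is not definable in $\R_{\an,\exp}$.
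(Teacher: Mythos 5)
Your proposal is correct and follows essentially the same route as the paper: an explicit local construction of the canonical extension on definable polydisk charts, with definability over $\R_{\an,\exp}$ coming from the norm-one hypothesis (real eigenvalues of the residues) by splitting the transition to the flat frame into an $\R_{\exp}$-definable real power/exponential factor, a bounded oscillatory factor definable in $\R_\an$, and polynomial terms in $\log$, followed by gluing and uniqueness via Deligne's analytic uniqueness made definable on relatively compact subsets by Lemma \ref{full}\eqref{full3}. The only difference is cosmetic: you verify definability in polar coordinates on polysectors, while the paper works on the universal cover restricted to bounded vertical strips using the Jordan decomposition of the $N_j$; your closing remark about the Fuchsian system is unnecessary, since the fullness argument already gives uniqueness.
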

Note that the extension $(V_{\O_\U},\nabla)$ provided by the proposition analytifies to the Deligne canonical extension $(V_{\O_{\U^\an}},\nabla)$ as an extension of $(\O_{\W^\an}\otimes_{\C_{\W^\an}}V_\C,\nabla)$.
\begin{proof}We first treat the local case.
\begin{lemma}\label{poly}
Let $\X=(\Delta^*)^k\times\Delta^\ell\subset \Y=\Delta^{k+\ell}$ with their standard structures as definable complex analytic spaces.  Then for any local system $V_\C$ on $\X$ whose monodromy has norm one eigenvalues, $V_{\O_\X}$ admits an extension to $\Y$ as a locally free sheaf with logarithmic connection $\nabla$ in the definable analytic category and the residues of $\nabla$ have eigenvalues in $\Sigma$.
\end{lemma}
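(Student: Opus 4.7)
The plan is to use Deligne's classical canonical frame formula and to verify directly that the resulting transition matrix between a flat frame of $V_\C$ and the Deligne frame is $\R_{\an,\exp}$-definable on a suitable sectorial cover of $\X$. Once this is established, I would take $V_{\O_\Y}:=\O_\Y^r$ globally on $\Y$ and specify the isomorphism $V_{\O_\Y}|_\X\cong V_{\O_\X}$ by sending the standard frame to the Deligne frame; the connection formula will then make the log pole and residue assertions automatic.

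First I would set up the frame data. The monodromies $T_1,\ldots,T_k$ of $V_\C$ around the coordinate divisors commute, so I would pick commuting endomorphisms $A_1,\ldots,A_k\in\End(V_\C)$ with $T_i=\exp(2\pi iA_i)$ and eigenvalues in $\Sigma$; the norm-one hypothesis forces each eigenvalue of each $A_i$ to be real. Next I would cover $\X$ by finitely many simply connected definable sectors of the form $U_\alpha=S_{\alpha_1}\times\cdots\times S_{\alpha_k}\times\Delta^\ell$, where each $S_{\alpha_i}\subset\Delta^*$ is an angular sector on which a branch of $\arg z_i$ is $\R_\an$-definable with values in a bounded interval. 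Fixing the corresponding branches of $\log z_i$ and a flat frame $e^\alpha$ on $U_\alpha$, I would set
\[\tilde e^\alpha:=\exp\!\Bigl(-\sum_{i=1}^kA_i\log z_i\Bigr)\cdot e^\alpha.\]
The standard Deligne calculation (looping around the $i$-th puncture) shows that these agree on overlaps and glue to a single-valued holomorphic frame $\tilde e$ of $V_{\O_\X}$, and a direct computation gives $\nabla\tilde e=-\sum_iA_i(dz_i/z_i)\cdot\tilde e$.

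The heart of the argument is to show that each $\tilde e^\alpha$ is $\R_{\an,\exp}$-definable on $U_\alpha$, and this is where the norm-one hypothesis is used. For a real eigenvalue $\lambda$ of $A_i$ one has $z_i^{-\lambda}=\exp(-\lambda\log|z_i|)\cdot\exp(-i\lambda\arg z_i)$; the first factor is $\R_{\an,\exp}$-definable via the real exponential on all of $\R$, and since $\arg z_i$ ranges over a bounded interval on $U_\alpha$ the second factor is $\R_\an$-definable via the restrictions of $\sin$ and $\cos$ to that interval. Combined with the polynomial-in-$\log z_i$ contributions from the nilpotent parts of the $A_i$, this gives definability of the matrix $\exp(-\sum A_i\log z_i)$ on $U_\alpha$, and hence of $\tilde e^\alpha$ as a section of $V_{\O_\X}|_{U_\alpha}$ in any flat trivialization.

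The hard part will be precisely this definability check; without the norm-one assumption some eigenvalue $\lambda=a+bi$ of $A_i$ would have $b\neq 0$, and then $z_i^{-\lambda}$ contains $\cos(b\log|z_i|)$ and $\sin(b\log|z_i|)$, which oscillate unboundedly as $z_i\to 0$ and are known not to be $\R_{\an,\exp}$-definable on a neighborhood of the origin. With the definability of $\tilde e$ in hand, the choice $V_{\O_\Y}:=\O_\Y^r$ with the definable identification on $\X$ gives a well-defined extension; its connection in the standard frame is $-\sum_iA_i(dz_i/z_i)$, which is logarithmic along $\D$ with residue $-A_i$ along $\{z_i=0\}$ whose eigenvalues lie in $\Sigma$ (after fixing the sign convention on $\Sigma$ consistently with the choice of logarithm).
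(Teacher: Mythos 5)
Your proposal is correct and is essentially the paper's own argument: both construct the Deligne frame by twisting a flat frame by $\exp$ of the canonical logarithms of the $T_i$, use the norm-one hypothesis to make those logarithms have real eigenvalues, and check $\R_{\an,\exp}$-definability by splitting the twist into a real-exponential radial factor, a bounded trigonometric factor, and a polynomial-in-log nilpotent factor on a finite cover with bounded argument (your sectors downstairs correspond to the paper's bounded vertical strips on the universal cover). The only point to tidy is the sign convention so that the residues land in $\Sigma$ rather than $-\Sigma$, which you already flag.
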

\begin{proof}
Choose a basepoint $x\in \X$ and consider the universal cover $\pi:\HH^k\times\Delta^\ell\to(\Delta^*)^k\times\Delta^\ell$ given by $\pi(z_1,\ldots,z_k,q_1,\ldots,q_\ell)=(e(z_1),\ldots,e(z_k),q_1,\ldots,q_\ell)$ where $e(z)=e^{2\pi i z}$.  Note that for any open bounded vertical strip $S\subset \HH$ with its standard definable structure, the restriction $\pi|_F$ to $F=S^k\times \Delta^\ell$ is $\R_{\an,\exp}$-definable.  Set $\tilde \X=\HH^k\times\Delta^\ell$.  A local system $V_\C$ on $\X$ corresponds to a monodromy representation $\rho:\pi_1(\X,x)\to \mathrm{GL}(V_{\C,x})$.  We canonically trivialize $\pi^*V_\C\cong \tilde \X\times V_{\C,x}$ with $\pi_1(\X,x)$-action $\gamma:(z,v)\mapsto (\gamma z,\rho(\gamma)v)$.   

Let $T_i$ be the monodromy around the $i$th punctured disk factor.  By taking the Jordan decomposition, we see that there are unique $N_i\in\frak{gl}(V_{\C,x})$ with eigenvalues in $\Sigma$ such that $T_i=\exp(2\pi i N_i)$.  Let $N_i=N^{ss}_i+N^u_i$ be the Jordan decomposition of $N_i$.  For any $v_x\in V_{\C,x} $, $\tilde v=\exp(2\pi i \sum_iz_i\otimes N_i)v_x$ is a $\pi_1(\X,x)$-invariant section of $\pi^*(\O_{\X^\an}\otimes_{\C_{\X^\an}}V_\C)$ and therefore descends to a section $v$ of $\O_{\X^\an}\otimes_{\C_{\X^\an}}V_\C$.  We claim that $v$ is in fact a section of $\O_\X\otimes_{\C_\X}V_\C$.  This may be checked on a definable open cover provided by maps of the form $\pi|_F$ as above, and $\exp(2\pi i \sum_j z_j\otimes N_j^u)$ is polynomial in the $z_j$ while $\exp(2\pi i \sum_j z_j\otimes N_j^{ss})$ is $\R_{\an,\exp}$-definable on $F$ since the $N_i^{ss}$ is real by the condition on the eigenvalues of $T_i$.

The assignment $v_x\mapsto v$ provides an isomorphism $\O_{\X}\otimes_\C V_{\C,x}\to \O_{\X}\otimes_{\C_{\X}}V_\C$ and therefore an extension $\O_{\Y}\otimes_\C V_{\C,x}$.  We see that $\Res\nabla=N_j$ along the $j$th boundary divisor as usual.
\end{proof}
By Lemma \ref{full}\eqref{full1} and the relative compactness of $\U$ we may take finitely many open subspaces $\Y_i\subset \Y$ with $\Y_i''\Subset \Y_i'\Subset \Y_i$ such that the $\Y''_i$ cover $\U$.  Then the existence statement for $\Y_i'\subset \Y_i$ coupled with the uniqueness for $\Y_i''\cap \Y_j''\subset \Y_i'\cap \Y_j'$ for each $i,j$ will yield part \eqref{del1}, while the uniqueness on $\Y_i'\subset \Y_i$ for each $i$ yields part \eqref{del2}.  We can take such a cover such that each pair $\X\cap \Y_i\subset \Y_i$ is isomorphic to $(\Delta^*)^{k_i}\times\Delta^{\ell_i}\subset \Delta^{k_i+\ell_i}$, and moreover such that the $\Y_i''$ and $\Y_i'$ are identified with concentric polydisks.

We therefore restrict to the case $\U\subset \Y$ are concentric polydisks.  Lemma \ref{poly} implies the existence of $V_{\O_\Y}$ (and therefore $V_{\O_\U}$).  Given two extensions $\E,\F$, by the uniqueness in the analytic case there is an isomorphism $\phi:\E^\an\xrightarrow{\cong} \F^\an$ of extensions which is compatible with the connections, and by Lemma \ref{cpt}\eqref{cpt1} it follows that $\phi|_\U$ is definabilizable, whence the claim.

\end{proof}

\begin{proof}[Proof of Theorem \ref{main}]
By definable GAGA \cite{bbt} the algebraic structure is unique if it exists.  Observe that if $V_{\O_{X^{\df}}}$ has an algebraic structure, then so does $F^\df\otimes_{\C_{X^\df}}V_\C\cong F^\df\otimes_{\O_{X^\df}} V_{\O_{X^\df}}$ for any coherent sheaf $F$ on $X$.

We now show that $V_{\O_{X^\df}}$ is algebraic.  By definable GAGA we may assume $X$ is affine, since if the claim is true in this case then the gluing maps associated to an affine (\'etale) cover will be algebraic.  Assume first that $X$ is smooth and let $(Y,D)$ be a log smooth compactification.  By Proposition \ref{del}, $(V_{\O_{X^\df}},\nabla)$ admits an extension $(V_{\O_{Y^\df}},\nabla)$.  By classical GAGA, $(V_{\O_{Y^\df}})^\an$ has an algebraic structure $V_{\O_{Y}}$, and by Proposition \ref{cpt} we have $(V_{\O_Y})^\df\cong V_{\O_{Y^\df}}$.  If $X$ is possibly non-reduced but $X^\reduced$ is smooth, then the inclusion $i:X^\reduced\to X$ admits a finite section $r:X\to X^\reduced$.  It follows by the above that $r_*(V_{\O_{X^\df}})\cong r_*\O_{X^\df}\otimes_{\C_{X^\df}}V_\C$ is algebraic, and by definable GAGA that the $r_*\O_{X^\df}$-module structure is algebraic as well.  Thus, $V_{\O_{X^\df}}$ is algebraic.  Finally, for arbitrary $X$, by blowing up along reduced centers we may produce a proper map $\pi:Y\to X$ for which $Y^\reduced$ is smooth and which is dominant on a dense Zariski open set $U$ of $X$.  Let $Z\subset X$ be the reduced complement of $U$ and $X'$ the image of $\pi$.  Then for a sufficiently large thickening $Z'$ of $Z$, the pushout $P$ of the diagram 
\[\begin{tikzcd}
Z'\times_XX'\ar[d]\ar[r]&X'\\
Z'&
\end{tikzcd}\]
has a natural proper dominant map $f:P\to X$.  By Noetherian induction $\O_{Z'^\df}\otimes_{\C_{Z'^\df}}(V_\C|_{Z'^\df})$ is algebraic while by definable GAGA we have that $\O_{X'^\df}\otimes_{\C_{X'^\df}}V_\C\subset \pi_*(\O_{Y^\df}\otimes_{\C_{Y}}\pi^{-1}V_\C)$ is algebraic.  The pushout $\O_{P^\df}\otimes_{\C_{P^\df}}f^{-1}V_\C$ is therefore also algebraic, as is $V_{\O_{X^\df}}\subset f_*(\O_{P^\df}\otimes_{\C_{P^\df}}f^{-1}V_\C)$ by definable GAGA.  

It remains to show that the connection is algebraic with regular singularities.  For the first claim, we may assume $X$ is affine and that $V_{\O_X}$ is free as an $\O_X$-module.  Then for any algebraic section $s$ of $V_{\O_{X^\df}}$, $\nabla s$ is a definable section of $V_{\O_{X^\df}}\otimes_{\O_{X^\df}}\Omega_{X^\df}\cong (V_{\O_X}\otimes_{\O_X}\Omega_X)^\df$, hence algebraic by definable GAGA.  Finally, from the construction (in particular Proposition \ref{del}) it is clear that the singularities of the connection are regular.
\end{proof}

We conclude this section with an example (see \cite[Example 3.2]{bbt}) which shows Theorem \ref{main} is false without the condition on the monodromy at infinity.

 \begin{eg}\label{countereg}
Let $X=\G_m$ with coordinate $q$.  Let $\alpha\in\C$ and consider the rank one $\C$-local system $V$ on $X^\an$ with multiplicative monodromy $\lambda=e^{2\pi i \alpha}$.  If $\mathcal{F}=\O_{X^\df}\otimes_{\C_{X^\df}}V$ were algebraic, it would necessarily be trivial; we claim that if $\alpha\notin\R$ then $\mathcal{F}$ will not be trivial in any o-minimal structure.  A trivializing section is of the form $v\otimes f$ for a nowhere zero multivalued holomorphic function $f$ on $\C^*$ with monodromy $\lambda$.  After multiplying by some power $q^n$, we may assume $f=e^{\alpha\log q+g(q)}$ for a holomorphic function $g:\C^*\to\C$.  As $f'/f=\alpha q^{-1}+g'(q)$ is single-valued and definable, it cannot have essential singularities at $0$ or $\infty$ (or else it would have infinite fibers), and therefore $g$ is algebraic---in particular, a polynomial in $q,q^{-1}$.  But restricting to positive real $q$, we have that
\[\{q\in\R_{>0}\mid f(q)\in\R\}=\{q\in\R_{>0}\mid (\Imag g)(q)+(\Imag \alpha)\log q\in \pi\Z\}\]
is definable, which is only the case if $\Imag g$ is constant and $\Imag \alpha=0$.

\end{eg}

\bibliography{biblio.flatdef}
\bibliographystyle{plain}
\end{document}